\documentclass[12pt,a4paper,twoside]{article}
\usepackage{booktabs}
\usepackage{multirow}
\usepackage{geometry}
\usepackage[UKenglish]{babel}
\usepackage[T1]{fontenc}
\usepackage{amsfonts,amsmath,amsthm,amssymb,mathrsfs}
\usepackage{fullpage}
\usepackage{xcolor,bm, bbm}
\usepackage{paralist}
\usepackage{dsfont}
\usepackage{float}
\usepackage{caption}
\usepackage{xfrac}
\usepackage[colorlinks, linkcolor=blue,citecolor=blue, anchorcolor=blue,urlcolor=blue,hypertexnames=false]{hyperref}
\usepackage{graphicx}
\usepackage{tikz,ifthen}
\usepackage{pgfplots}
\usepackage{indentfirst} 
\pgfplotsset{compat=1.18} 
\usepgfplotslibrary{fillbetween}
\usepackage{tikz}
\usetikzlibrary{arrows.meta,calc,positioning}

\newtheorem{theorem}{Theorem}[section]
\newtheorem{lemma}[theorem]{Lemma}

\theoremstyle{definition}

\numberwithin{equation}{section}

\DeclareMathOperator{\N}{\mathbb{N}}

\newcommand{\xn}{(x_n)_{n \in \mathbb{N}} }

\title{\bf Weak Inhomogeneous Poissonian Pair Correlation and Equidistribution}

\author{Zhiqin Tang \and Qing-Long Zhou}

\date{}

\begin{document}

\maketitle
\vspace{-1.5\baselineskip} 
\begin{abstract}

This paper investigates inhomogeneous Poissonian pair correlation (PPC), its weak form, and equidistribution. We establish that weak inhomogeneous PPC does not imply inhomogeneous PPC. Furthermore, we construct a sequence satisfying weak inhomogeneous PPC that fails to be equidistributed, which stands in sharp contrast to the homogeneous case. Finally, we prove that for distinct $\gamma_1, \gamma_2 \in (0, \frac{1}{2}]$, weak $\gamma_1$-PPC does not imply weak $\gamma_2$-PPC, showing that different inhomogeneous parameters give rise to mutually independent notions of weak inhomogeneous PPC.

\end{abstract}

\section{Introduction}
Understanding the distribution of sequences of real numbers modulo one is a well-studied and important problem in mathematics (see  \cite{B12,KN74}). We begin with a brief overview of three central concepts --- equidistribution, Poissonian pair correlation (PPC) and weak PPC --- along with selected results that motivate the present work.

\subsection{Equidistribution, PPC and Weak PPC}

A real sequence $\xn$ is said to be equidistributed (or uniformly distributed modulo one, abbreviated as u.d. mod 1) if for any interval $[a,b]\subseteq[0,1)$ we have
\begin{equation*}
  \lim_{N \to \infty} \frac{\sharp \{1\le n\le N\colon x_n \bmod 1 \in [a, b]\}}{N}=b-a.
 \end{equation*}
This concept, formally introduced by Weyl in \cite{W16}, has attracted extensive interest in recent decades owing to its close connections with number theory, fractal geometry and probability theory. Weyl criterion \cite[Theorem 3.2]{KN74} yields that any polynomial sequence $(x_n)$ --- where $x_n=\alpha_d n^d+\cdots+\alpha_1n+\alpha_0$ with at least one irrational coefficient --- is equidistributed. For more about  equidistribution, we refer to  \cite{B12,KN74}.

A more fine scale understanding of the distribution of a sequence is provided by the pair correlation statistics. Namely, given a sequence of real numbers $(x_n), \delta\in[0,1]$ and $s>0,$ we would like to understand the asymptotic behaviour of
\[\frac{\sharp \left\{ 1 \leq m \neq n \leq N\colon \|x_m - x_n\| \leq \frac{s}{N^{\delta}} \right\}}{N^{2-\delta}} \ \ \text{as} \ \ N\to\infty.\]
Here and throughout \(\| \cdot \|\) denotes the distance to the nearest integer.
For $\delta=0,$ the asymptotic behavior is equivalent to equidistribution \cite[Theorem 1]{HZ24}.

A sequence \((x_n)_{n \in \mathbb{N}}\) is said to have PPC ($\delta=1$) if for every fixed 
\(s > 0\),
\[
\lim_{N \to \infty}\frac{\sharp \left\{ 1 \leq m \neq n \leq N\colon \|x_m - x_n\| \leq \frac{s}{N} \right\}}{N} = 2s.
\]
 PPC behavior of a sequence is related to its equidistribution property. A sequence having PPC must be uniformly distributed \cite{ALP18,GL17,Ste20}, but the converse is not necessarily true.  The Kronecker  sequence $(n\alpha)_{n\in\N}$ is uniformly distributed for irrational $\alpha$, but fails to have PPC for any $\alpha\in\mathbb{R}$. Further examples can be found in \cite{HK19,PS19}.

Proving the PPC property for deterministic sequences is difficult; thus, there are few results in this direction. For example, EI-Baz, Marklof and Vinogradov \cite{EMV15}  proved that the sequence $(\sqrt{n})$ has PPC. Lutsko, Athanasios and Technau \cite{LAT25} proved that $\alpha n^\theta(0<\theta<\frac{14}{41}\text{ and }\alpha>0)$ has PPC, which is later improved to $0<\theta<\frac{43}{117}$ by Radziwiłł and Shubin \cite{RS24}. Lutsko and Technau \cite{LT25} showed that the slowly growing sequence $\alpha(\log n)^A$ has PPC for all $\alpha>0$ and $A>1$. Recently, Hauke \cite{H25} proved that for $(a_n)_{n\in\N}$ arising from the set of rough numbers with explicit roughness parameters and any badly approximable $\alpha$, 
$(a_n\alpha)_{n\in\N}$ has PPC. The metric theory of pair correlation was studied from a mathematical point of view by Rudnick and Sarnak \cite{RS98}. They showed that for almost all $\alpha$, the sequence $(a_n\alpha)_{n\in\N}$ has PPC, where $(a_n)$ is a polynomial of degree at least two. See \cite{AB21,ABTY23,AEM21,ALL17,AY25,RSZ01,RZ99} for more metric PPC results.

We say that the sequence $(x_n)_{n\in\N}$ has weak PPC with parameter $0<\delta<1$ if for all $s>0,$
\[\lim_{N\to\infty}\frac{\sharp \left\{ 1 \leq m \neq n \leq N\colon \|x_m - x_n\| \leq \frac{s}{N^{\delta}} \right\}}{N^{2-\delta}} =2s.\]
To our knowledge, weak PPC was first introduced by Nair and Pollicot \cite{NP03}. 
Steinerberger \cite{Ste17, Ste20} showed it implies equidistribution and viewed it 
as an intermediate property between equidistribution and PPC. Hauke and Zafeiropoulos 
\cite{HZ24} proved that PPC implies weak PPC, but the converse fails. 
Wei{\ss} and Skill \cite{WS21} showed that van der Corput sequences and 
\(\bigl((\sqrt{5}-1)n/2\bigr)_{n\in\mathbb N}\) have weak PPC but not PPC. 
Weiss \cite{Wei24} recently proved that \((n\alpha)_{n\in\mathbb N}\) satisfies 
weak PPC for every badly approximable \(\alpha\), and this was later extended 
to all irrational \(\alpha\) \cite{W22}.

In conclusion, the studies reviewed above yield the relational diagram illustrated in Figure~\ref{homogeneous}.
\begin{figure}[H]
\centering
\begin{tikzpicture}[
    >=Latex,
    every node/.style={font=\normalsize},
    rel/.style={double, double distance=2pt, ->, line width=0.6pt}
]

%------------------------------------------------
% nodes
%------------------------------------------------
\node (T) at (0,4.4) {PPC};
\node (L) at (-6.0,0) {Weak PPC};
\node (R) at (6.0,0) {Equidistribution};

%------------------------------------------------
% left outer: weak PPC -> PPC (blocked by /)
%------------------------------------------------
\draw[rel] (-5.05,0.62) -- (-0.65,3.78)
    node[midway, font=\normalsize] {$\textcolor{red}{/}$};

%------------------------------------------------
% left inner: PPC -> weak PPC
%------------------------------------------------
\draw[rel] (-0.25,3.30) -- (-4.70,0.22);

%------------------------------------------------
% right outer: u.d. mod 1 -> PPC (blocked by /)
%------------------------------------------------
\draw[rel] (5.05,0.62) -- (0.65,3.78)
    node[midway, font=\normalsize] {$\textcolor{red}{/}$};

%------------------------------------------------
% right inner: PPC -> u.d. mod 1
%------------------------------------------------
\draw[rel] (0.25,3.30) -- (4.70,0.22);

%------------------------------------------------
% bottom upper: weak PPC -> u.d. mod 1
%------------------------------------------------
\draw[rel] (-4.10,0.18) -- (4.10,0.18);

%------------------------------------------------
% bottom lower: u.d. mod 1 -> weak PPC (blocked by /)
%------------------------------------------------
\draw[rel] (4.10,-0.32) -- (-4.10,-0.32)
    node[midway, font=\normalsize] {$\textcolor{red}{/}$};

\end{tikzpicture}
\caption{Relations among equidistribution, PPC and weak PPC}
\label{homogeneous}
\end{figure}%\begin{remark}

\noindent\textbf{Question$\colon$} Are the relations in Figure \ref{homogeneous} valid in the inhomogeneous setting? 
\smallskip

The present paper offers a partial contribution towards this question, falling short of a full resolution. The relations in Figure \ref{homogeneous} are generally expected to persist in the inhomogeneous setting. However,  our results, together with known facts, reveal that distinguishes between PPC and inhomogeneous PPC.

\subsection{Inhomogeneous PPC and its weak form}
For $\gamma \in (0,1)$, a sequence $(x_n)_{n \in \mathbb{N}}\subseteq[0,1)$ is said to have inhomogeneous PPC (also referred to as $\gamma$-PPC) if for every $s > 0$,
\begin{equation}\label{IPPC}
\lim_{N \to \infty}\frac{\sharp \left\{ 1 \leq m \neq n \leq N\colon \|x_m - x_n-\gamma\| \leq \frac{s}{N} \right\}}{N} = 2s.
\end{equation}
A first treatment of the notion of $\gamma$-PPC can be found in \cite{Ram20}.  Hauke and Zafeiropoulos \cite[Theorems 1 and 4]{HZ25} proved that $\gamma$-PPC and equidistribution are independent of each other (see Figure~\ref{inhomogeneous}).

Similarly, for $\gamma\in (0,1)$ and $0<\delta<1,$ we say that the sequence $(x_n)_{n\in\N}$ has weak inhomogeneous PPC (also called weak $\gamma$-PPC)  if for every $s>0,$
\begin{equation}\label{weak}
\lim_{N\to\infty}\frac{\sharp \left\{ 1 \leq m \neq n \leq N\colon \|x_m - x_n-\gamma\| \leq \frac{s}{N^{\delta}} \right\}}{N^{2-\delta}} =2s.
\end{equation}
Since weak $\gamma$-PPC is equivalent to weak $(1-\gamma)$-PPC, it suffices to restrict attention to $0 < \gamma \leq 1/2$.

\begin{theorem}\label{thm1}
For each $\gamma \in (0,\frac{1}{2}]$, and $0<\delta<1$, there is a sequence 
$(x_n)_{n\in\mathbb{N}}$ satisfying weak $\gamma$-PPC but not $\gamma$-PPC.
\end{theorem}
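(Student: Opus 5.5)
The plan is to take a sequence that behaves like an i.i.d. uniform sequence and pair each term with its exact $\gamma$-shift. The forced exact coincidences $x_m-x_n=\gamma$ then number only $O(N)$. That is invisible at the weak scale $N^{-\delta}$, where the normalisation is $N^{2-\delta}$, but it is a positive proportion at the Poissonian scale $1/N$.

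\textbf{Construction.} Let $(u_k)_{k\ge1}$ be i.i.d.\ uniform on $[0,1)$. Set
\[
x_{2k-1}=u_k,\qquad x_{2k}=\{u_k+\gamma\}.
\]
We will show that almost every realisation works.

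\textbf{Step 1: reduce to pair counts of the $u_k$.} Take $N=2M$; odd $N$ changes counts by $O(N)$, which is harmless at the weak scale and handled the same way at scale $1/N$. Let $r$ be the radius, so $r=s/N^{\delta}$ or $r=s/N$. The ordered pairs $(m,n)$ with $\|x_m-x_n-\gamma\|\le r$ are of four types according to the parities of $m$ and $n$. For the underlying indices $k\ne j$, the conditions are
\[
\|u_k-u_j-\gamma\|\le r,\quad \|u_k-u_j-\gamma\|\le r,\quad \|u_k-u_j\|\le r,\quad \|u_k-u_j-2\gamma\|\le r .
\]
In addition, the pairs with $k=j$, namely $(2k,2k-1)$, satisfy $x_{2k}-x_{2k-1}\equiv\gamma$ exactly. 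These contribute exactly $M=N/2$ pairs at every radius. When $\gamma=\tfrac12$ the pair $(2k-1,2k)$ also counts, which gives $N$ pairs; either way the contribution is a positive multiple of $N$.

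\textbf{Step 2: the probabilistic input.} For a fixed shift $\beta\in[0,1)$ (we need $\beta\in\{0,\gamma,2\gamma\}$), set
\[
R_\beta(M,r)=\sharp\{k\ne j\le M\colon \|u_k-u_j-\beta\|\le r\}.
\]
Almost surely $R_\beta(M,r)\sim 2rM^2$ as $M\to\infty$, both for $r=c/M^{\delta}$ and for $r=c/M$, for every $c>0$. The proof is the standard one.
\begin{itemize}
\item The expectation is $M(M-1)\cdot 2r$.
\item The variance is $O(M^2r+M^3r^2)$, since this is a U-statistic.
\item Chebyshev's inequality along a geometric subsequence $M_i=\lfloor\theta^i\rfloor$, followed by Borel--Cantelli, gives almost sure convergence along the subsequence.
\item Monotonicity in $M$ and $r$ (sandwiching) extends this to all $M$ and all $c$, first for rational $c$ and then by monotonicity for every $c>0$; letting $\theta\downarrow1$ finishes.
\end{itemize}
I expect this step to be the main technical obstacle, chiefly the bookkeeping in the variance bound at scale $r\asymp 1/M$. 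The variance is then $O(M)$ against a mean of order $M$, so the subsequence must be chosen geometric with ratio close to $1$ for the sandwiching to close. This is classical, and I would follow the metric arguments cited in the introduction.

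\textbf{Step 3: conclude.} At the weak scale, $r=s/N^{\delta}$. Each of the four types contributes $2rM^2(1+o(1))=\tfrac12 sN^{2-\delta}(1+o(1))$, and the diagonal contributes $O(N)=o(N^{2-\delta})$. The total is $2sN^{2-\delta}(1+o(1))$, so weak $\gamma$-PPC holds.

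This count is valid whatever the shifts are. When $\gamma=\tfrac12$ the third and fourth types coincide as conditions on $(u_k,u_j)$, and two of the types coincide in any case. This is not double counting, since they arise from different index pairs $(m,n)$.

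At the Poissonian scale, $r=s/N$. The same computation gives $2sN(1+o(1))$ from the four types, plus at least $N/2$ from the diagonal. Hence
\[
\liminf_{N\to\infty}\frac{1}{N}\,\sharp\{\ldots\}\ \ge\ 2s+\tfrac12\ \neq\ 2s,
\]
so $\gamma$-PPC fails.

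Finally, fix one realisation from the almost sure event. This gives a deterministic sequence with the required properties.
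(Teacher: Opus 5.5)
Your proof is correct. It uses the same random construction as the paper, but you analyse the weak scale by a genuinely different route.

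The paper never decomposes the pair count. It passes through the integral characterisation of Lemma~\ref{integral} and proves the shift-free statement $\sup_{t\in\mathbb{T}}|F_N(t,s,\delta)-s|\to 0$ almost surely (Lemmas~\ref{discrepancy} and~\ref{uniform}). Bernstein's inequality is applied to $Z_N(I)$ for each interval $I$ in a grid family of size $O(N^{2\delta})$. The pairing enters only through $I\cap(I-\gamma)=\emptyset$, which makes $\mathbf{1}_I(y_n)+\mathbf{1}_I(y_n+\gamma)$ a Bernoulli variable. The exponential tail $\exp(-cN^{1-\delta})$ beats the union bound, so Borel--Cantelli applies at every $N$ with no subsequence step.

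Your route is different. You split by parity into the shifted counts $R_\gamma,R_\gamma,R_0,R_{2\gamma}$ of the i.i.d.\ sequence plus the diagonal. You then use a second-moment bound, Chebyshev along geometric subsequences and monotone sandwiching. This is essentially the technique the paper uses later for Lemma~\ref{lemma for theorem 1.2} and Theorem~\ref{thm3}.

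Each route buys something:
\begin{itemize}
\item The paper's route gives a uniform local-discrepancy statement, which yields weak $\beta$-PPC for all shifts $\beta$ at once.
\item Your route needs only second moments. It also avoids the integral characterisation, whose proof the paper only cites. That characterisation is really an equivalence for all $s>0$ simultaneously, not for a fixed $s$, since $I_N(\gamma,s,\delta)=\int_0^s R_N(\gamma,u,\delta)\,\mathrm{d}u$.
\item Your route also covers both scales with one argument, so you get the exact Poissonian limit $2s+\tfrac12$ (or $2s+1$ when $\gamma=\tfrac12$) for every $s$.
\end{itemize}

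For the failure of $\gamma$-PPC this is more than required. The paper uses only the deterministic count of $N/2$ diagonal pairs with $s<\tfrac14$, which needs no probabilistic input.

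One small point: your remark that odd $N$ is ``handled the same way'' at scale $1/N$ is loose. There the $O(N)$ extra pairs are not negligible. They are nonnegative, though, so discarding them (or restricting to even $N$) suffices to contradict convergence to $2s$.
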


\begin{theorem}\label{thm2}
For each $\gamma \in (0,\frac{1}{2}]$ and $0<\delta<1$, there exists a sequence $(x_n)_{n\in\mathbb{N}}$ with weak $\gamma$-PPC that is not equidistributed.
\end{theorem}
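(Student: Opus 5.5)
The plan is a probabilistic construction: pick a non-uniform density $f$ on $[0,1)$ whose autocorrelation at lag $\gamma$ equals $1$, and show that an i.i.d.\ sample from $f$ almost surely has weak $\gamma$-PPC but is almost surely not equidistributed. Heuristically, the number of pairs $m\neq n\le N$ with $\|x_m-x_n-\gamma\|\le s/N^{\delta}$ is about $N^2\cdot\frac{2s}{N^\delta}\int_0^1 f(x+\gamma)f(x)\,dx$. So it suffices to have $\int_0^1 f(x+\gamma)f(x)\,dx=1$ with $f\not\equiv 1$.

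\textbf{Step 1: choice of density.} Take
\[
f(x)=1+a\cos(2\pi k_1x)+b\cos(2\pi k_2x)
\]
with integers $k_1\neq k_2\ge 1$ and small $a,b\ge 0$, not both zero, so that $f>0$. By Parseval,
\[
\int_0^1 f(x+\gamma)f(x)\,dx=1+\tfrac{a^2}{2}\cos(2\pi k_1\gamma)+\tfrac{b^2}{2}\cos(2\pi k_2\gamma).
\]
We need $k_1,k_2$ with $\cos(2\pi k_1\gamma)\ge 0\ge \cos(2\pi k_2\gamma)$.
\begin{itemize}
\item If $\gamma$ is irrational, $(k\gamma)$ is dense modulo one, so both signs occur.
\item If $\gamma=p/q$ with $q\ge 2$, then $k=q$ gives cosine $1$, and some $k$ with $kp\equiv\lfloor q/2\rfloor \pmod q$ gives a nonpositive cosine. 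For $\gamma=\tfrac12$ one can take $k_1=2$, $k_2=1$.
\end{itemize}
If some cosine vanishes, use that single frequency alone. Otherwise choose $a^2\cos(2\pi k_1\gamma)+b^2\cos(2\pi k_2\gamma)=0$.

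\textbf{Step 2: expectation.} Let $X_n$ be i.i.d.\ with density $f$, and set
\[
S_N(s)=\sharp\{m\neq n\le N:\|X_m-X_n-\gamma\|\le sN^{-\delta}\}.
\]
Since $f$ is continuous and $1$-periodic, for $r\to0$,
\[
P(\|X_1-X_2-\gamma\|\le r)=2r\Big(\int_0^1 f(y+\gamma)f(y)\,dy+O(r)\Big)=2r(1+O(r)).
\]
Hence $E S_N(s)=2sN^{2-\delta}(1+o(1))$.

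\textbf{Step 3: variance and almost sure convergence.} This is the main obstacle. Write $S_N(s)$ as a sum of indicators $I_{m,n}$ and split the covariances by index overlap.
\begin{itemize}
\item Pairs with disjoint index sets are independent and contribute nothing.
\item Pairs sharing one index number $O(N^3)$, each with covariance $O(N^{-2\delta})$ because $f$ is bounded. This gives $O(N^{3-2\delta})$.
\item Diagonal terms contribute $O(N^{2-\delta})$.
\end{itemize}
Hence
\[
\operatorname{Var}S_N/(ES_N)^2=O(N^{-1}+N^{\delta-2}).
\]
Along $N_j=\lfloor(1+\varepsilon)^j\rfloor$ this is summable. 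Chebyshev and Borel--Cantelli then give $S_{N_j}(s)/N_j^{2-\delta}\to 2s$ almost surely, simultaneously for all rational $s$.

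To pass to all $N$, use monotonicity. For $N_j\le N<N_{j+1}$,
\[
S_{N_j}\!\big(s(N_j/N)^{\delta}\big)\le S_N(s)\le S_{N_{j+1}}\!\big(s(N_{j+1}/N)^{\delta}\big).
\]
Sandwich with rational $s'$ close to $s(1+\varepsilon)^{\pm\delta}$. Letting $\varepsilon\to0$ along a countable sequence then yields weak $\gamma$-PPC almost surely.

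\textbf{Step 4: non-equidistribution.} By the strong law of large numbers, the frequency of $X_n\in[0,c]$ tends almost surely to $\int_0^c f\neq c$ for a suitable $c$, since $f\not\equiv1$ is continuous. So $(X_n)$ is almost surely not equidistributed.

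Intersecting the two full-measure events from Steps 3 and 4 gives a realisation $(x_n)$ with the properties required in Theorem~\ref{thm2}.
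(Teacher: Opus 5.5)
Your proposal is correct and follows the paper's strategy: take an i.i.d.\ sample from a non-uniform density $g$ with $\int_0^1 g(x)g(x+\gamma)\,\mathrm{d}x=1$, and obtain $R_N(\gamma,s,\delta)\to 2s$ from a first/second moment computation, Borel--Cantelli along a sparse subsequence, and interpolation in $N$. The differences are minor: the paper uses a two-step density whose heights are the roots of $\alpha t^2-t+1=0$ (resp.\ $2\alpha t^2-2t+1=0$ for $\gamma=\tfrac12$) instead of your Parseval-tuned trigonometric polynomial; and your bound $\operatorname{Var}S_N=O(N^{3-2\delta})$, which keeps the covariances of pairs sharing one index, is more accurate than the paper's stated $\operatorname{Var}R_N\ll N^{\delta-2}$ (for the paper's step density those covariances really do make $\operatorname{Var}R_N$ of order $N^{-1}$, which is still summable along $N_m=m^2$, so nothing breaks).
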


Combining Theorems \ref{thm1} and \ref{thm2} with the results of \cite[Theorems 1 and 4]{HZ25} yields the relations among equidistribution, $\gamma$-PPC, and weak $\gamma$-PPC depicted in Figure~\ref{inhomogeneous}.
It remains open whether $\gamma$-PPC is stronger than weak $\gamma$-PPC; we conjecture that the two notions are in fact incomparable in strength. Also open is the existence of an equidistributed sequence that fails to be weak $\gamma$-PPC.

\begin{figure}[H]
\centering
\begin{tikzpicture}[
    >=Latex,
    every node/.style={font=\normalsize},
    rel/.style={double, double distance=2pt, ->, line width=0.6pt}
]

%------------------------------------------------
% nodes
%------------------------------------------------
\node (T) at (0,4.4) {$\gamma$-PPC};
\node (L) at (-6.0,0) {$\gamma$-weak PPC};
\node (R) at (6.0,0) {Equidistribution};

%------------------------------------------------
% left outer: gamma-weak PPC -> gamma-PPC (blocked by /)
%------------------------------------------------
\draw[rel] (-5.00,0.62) -- (-0.65,3.78)
    node[midway, font=\normalsize] {$\textcolor{red}{/}$}
    node[midway, sloped, above=6pt, font=\footnotesize] {Theorem \ref{thm1}};

%------------------------------------------------
% left inner: gamma-PPC -> gamma-weak PPC
%------------------------------------------------
\draw[rel] (-0.25,3.30) -- (-4.70,0.22)
node[midway, font=\normalsize] {$\textcolor{red}{?}$};
    
%------------------------------------------------
% right outer: Equidistribution -> gamma-PPC (blocked by /)
%------------------------------------------------
\draw[rel] (5.05,0.62) -- (0.65,3.78)
node[midway, sloped, above=6pt, font=\footnotesize] {Theorem 4 in \cite{HZ25}}
node[midway, font=\normalsize] {$\textcolor{red}{/}$};

%------------------------------------------------
% right inner: gamma-PPC -> Equidistribution
%------------------------------------------------
\draw[rel] (0.25,3.30) -- (4.70,0.22)
node[midway, font=\normalsize] {$\textcolor{red}{/}$}
    node[midway, sloped, below=6pt, font=\footnotesize] {Theorem 1 in \cite{HZ25}};

%------------------------------------------------
% bottom upper: gamma-weak PPC -> Equidistribution
%------------------------------------------------
\draw[rel] (-4.10,0.18) -- (4.10,0.18)
    node[midway, above=6pt, font=\footnotesize] {Theorem \ref{thm2}}
    node[midway, font=\normalsize] {$\textcolor{red}{/}$};

%------------------------------------------------
% bottom lower: Equidistribution -> gamma-weak PPC (blocked by /)
%------------------------------------------------
\draw[rel] (4.10,-0.32) -- (-4.10,-0.32)
    node[midway, font=\normalsize] {$\textcolor{red}{?}$};

%------------------------------------------------
% additional edge: gamma-PPC -> Equidistribution (with Theorem 1.4 in [13])
% (Note: This is the right inner arrow already labeled above)
% No additional edge needed; the right inner arrow already covers this.
% If you meant a separate edge, we can add it here.

\end{tikzpicture}
\caption{Relations among equidistribution, $\gamma$-PPC and weak $\gamma$-PPC}
\label{inhomogeneous}
\end{figure}

Finally, we turn to the relation between weak $\gamma$-PPC for distinct values of $\gamma$. Theorem \ref{thm2} shows that weak $\gamma$-PPC does not imply weak PPC (interpreted as weak $0$-PPC via \eqref{weak}); otherwise, any sequence satisfying weak $\gamma$-PPC would necessarily be equidistributed. This naturally raises the question of whether an analogous phenomenon holds for weak $\gamma_1$-PPC and weak $\gamma_2$-PPC with distinct $\gamma_1, \gamma_2 \in (0, 1/2]$. The following theorem answers this in the affirmative.

\begin{theorem}\label{thm3}
For any distinct $\gamma_1, \gamma_2 \in (0,\tfrac{1}{2}]$ and $0<\delta<1$, there exists a real sequence $(y_n)_{n\in\mathbb{N}}$ that has weak $\gamma_1$-PPC but not weak $\gamma_2$-PPC.
\end{theorem}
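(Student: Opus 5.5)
The plan is to build $(y_n)$ as a typical realisation of i.i.d.\ random variables with a non-uniform density $f$ on $[0,1)$. The density is tuned so that its autocorrelation equals $1$ at the shift $\gamma_1$ but not at the shift $\gamma_2$. The heuristic behind this choice is that for i.i.d.\ points with a continuous density $f$, the weak inhomogeneous pair count at shift $\gamma$ should satisfy
\[
\frac{\sharp\{1\le m\ne n\le N\colon \|y_m-y_n-\gamma\|\le sN^{-\delta}\}}{N^{2-\delta}}\longrightarrow 2s\int_0^1 f(x)\,\frac{f(x+\gamma)+f(x-\gamma)}{2}\,dx = 2s\,R_f(\gamma),
\]
where $R_f(\gamma)=\int_0^1 f(x)f(x+\gamma)\,dx$ is even in $\gamma$ (indices taken mod $1$). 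So weak $\gamma$-PPC should hold exactly when $R_f(\gamma)=1$.

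\textbf{Step 1 (choice of $f$).} Take
\[
f(x)=1+a\cos(2\pi x)+b\cos(4\pi x),\qquad |a|+|b|<1,
\]
so that $f>0$ and $\int_0^1 f=1$. Then
\[
R_f(\gamma)=1+\tfrac{a^2}{2}\cos(2\pi\gamma)+\tfrac{b^2}{2}\cos(4\pi\gamma).
\]
We need $a^2\cos(2\pi\gamma_1)+b^2\cos(4\pi\gamma_1)=0$ together with $a^2\cos(2\pi\gamma_2)+b^2\cos(4\pi\gamma_2)\neq0$, and at least one of $a,b$ nonzero.
\begin{itemize}
\item If $\cos(2\pi\gamma_1)$ and $\cos(4\pi\gamma_1)$ have opposite signs or one of them vanishes, the first equation has a nontrivial solution $(a^2,b^2)$, and the solution set is a ray. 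The second condition can fail only if the vectors $(\cos 2\pi\gamma_i,\cos4\pi\gamma_i)$ are parallel, which for distinct $\gamma_1,\gamma_2\in(0,\frac12]$ I expect to be a short case check. For example, $\gamma_1=\frac14$ forces $b=0$, and then $R_f(\gamma_2)\neq1$ because $\gamma_2\ne\frac14$.
\item Otherwise I will pass to a pair of frequencies $k<l$ for which $\cos(2\pi k\gamma_1)$ and $\cos(2\pi l\gamma_1)$ have opposite signs (or one vanishes). For irrational $\gamma_1$ such frequencies exist by equidistribution of $(k\gamma_1)$, and for rational $\gamma_1$ by a direct choice. I then repeat the same linear-algebra argument with $f=1+a\cos(2\pi kx)+b\cos(2\pi lx)$.
\end{itemize}
This step is elementary but fiddly.

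\textbf{Step 2 (expectation).} Let $Y_1,Y_2,\dots$ be i.i.d.\ with density $f$, and let $S_N(\gamma,s)$ denote the pair count above. For $m\neq n$,
\[
\mathbb{P}\bigl(\|Y_m-Y_n-\gamma\|\le sN^{-\delta}\bigr)=2sN^{-\delta}R_f(\gamma)+O\bigl(N^{-2\delta}\bigr),
\]
using that $f$ is Lipschitz. Hence $\mathbb{E}S_N=2sN^{2-\delta}R_f(\gamma)(1+o(1))$.

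\textbf{Step 3 (concentration, the main obstacle).} I will bound $\mathrm{Var}\,S_N$ by splitting pairs of pairs $\{(m,n),(m',n')\}$ according to how many indices they share.
\begin{itemize}
\item Disjoint index sets give independent indicators, so they contribute $0$.
\item Pairs sharing one index contribute $O\bigl(N^3\cdot N^{-2\delta}\bigr)$.
\item Identical pairs contribute $O\bigl(N^{2-\delta}\bigr)$.
\end{itemize}
Therefore
\[
\mathrm{Var}\,S_N/(\mathbb{E}S_N)^2=O\bigl(N^{-1}+N^{\delta-2}\bigr).
\]
This is summable along the lacunary sequence $N_j=\lfloor (1+\varepsilon)^j\rfloor$, so by Chebyshev and Borel--Cantelli the limit holds almost surely along $N_j$ for each fixed rational $s$. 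To pass to all $N$, I will sandwich $N_j\le N<N_{j+1}$: the count is monotone in both $N$ and $s$, and the normalisations $N^{2-\delta}$ and $N^\delta$ change only by factors $(1+\varepsilon)^{O(1)}$ across the block. Letting $\varepsilon\to0$ through a countable set and using density of the rational $s$ gives almost surely
\[
S_N(\gamma,s)/N^{2-\delta}\to 2sR_f(\gamma)\quad\text{for all } s>0,\ \text{for both } \gamma=\gamma_1 \text{ and } \gamma=\gamma_2.
\]

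\textbf{Step 4 (conclusion).} Fix one realisation $(y_n)$ in this full-measure event. Since $R_f(\gamma_1)=1$, it has weak $\gamma_1$-PPC. Since $R_f(\gamma_2)\neq1$, the limit at $\gamma_2$ is $2sR_f(\gamma_2)\neq 2s$, so it fails weak $\gamma_2$-PPC, which proves Theorem~\ref{thm3}. As a by-product, $(y_n)$ is not equidistributed because $f\not\equiv1$, consistent with the picture described after Theorem~\ref{thm2}.
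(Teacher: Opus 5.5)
Your probabilistic part (Steps 2--4) is sound. The gap is in Step 1, which is exactly where the distinctness of $\gamma_1,\gamma_2$ has to enter.

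\textbf{The gap in Step 1.} You expect that the vectors $(\cos 2\pi\gamma_i,\cos 4\pi\gamma_i)$ cannot be parallel, but this is false. With $c_i=\cos 2\pi\gamma_i$, their determinant is $(c_2-c_1)(2c_1c_2+1)$, so they are parallel whenever $\cos 2\pi\gamma_1\cos 2\pi\gamma_2=-\tfrac12$. For example, $\gamma_1=\tfrac16$ falls in your first bullet. The constraint $R_f(\tfrac16)=1$ forces $a^2=b^2$, and then $R_f(\tfrac12)=1-\tfrac{a^2}{2}+\tfrac{b^2}{2}=1$. So for $\gamma_2=\tfrac12$ your $f$ gives weak PPC at both shifts. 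Your second bullet (``repeat the same linear-algebra argument'') inherits the same unverified step for general $(k,l)$. As written, Step 1 does not produce the density for all pairs.

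\textbf{Two repairs.}
(i) Keep trigonometric polynomials but prove that some frequency pair works. First dispose of any $k$ with $\cos 2\pi k\gamma_1=0\ne\cos 2\pi k\gamma_2$, where a single frequency already suffices. If all remaining pairs with $\cos 2\pi k\gamma_1>0>\cos 2\pi l\gamma_1$ were degenerate, you would get $\cos 2\pi k\gamma_2=\lambda\cos 2\pi k\gamma_1$ for all $k\ge1$. The doubling formula then gives $(1-\lambda)(1+2\lambda\cos^2 2\pi k\gamma_1)=0$ for all $k$, and a short check excludes both factors.
(ii) More simply, take the step density from the paper's proof of Theorem~\ref{thm2} with $\gamma=\gamma_1$, i.e.\ mass on $[0,\alpha)$ and $[\gamma_1,\gamma_1+\alpha)$ (resp.\ $[\tfrac12,\tfrac12+\alpha)$). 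It satisfies $R_f(\gamma_1)=1$. Differences of points in its support lie within $\alpha$ of $0,\pm\gamma_1$, while $\gamma_2\not\equiv 0,\pm\gamma_1\pmod 1$. Hence for small $\alpha$ the $\gamma_2$-count is almost surely zero for all large $N$. Steps 2--4 only use that $f$ is bounded and $R_f$ is Lipschitz, which holds here.

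\textbf{Comparison with the paper.} With that repair your proof is correct and genuinely different from the paper's. The paper does not use a non-uniform density for Theorem~\ref{thm3}. Instead it builds a structured sequence from uniform i.i.d.\ seeds $x_k$: for each of $K_r\asymp r^{\delta/(1-\delta)}$ seeds it plants $r$ copies of $x_k$ and $r$ copies of $x_k+\gamma_2$. The exact $\gamma_2$-differences give the deterministic bound $R_{N_r}(\gamma_2,s,\delta)\ge c>0$, and the $\gamma_1$-count reduces to seed differences near $\gamma_1$ and $\gamma_1\pm\gamma_2$.

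Your argument is essentially the paper's Lemma~\ref{lemma for theorem 1.2} (its tool for Theorem~\ref{thm2}) applied to a density that separates the two shifts. This buys several things:
an explicit limit $2sR_f(\gamma)$ at every shift, so the failure at $\gamma_2$ holds for every $s>0$, not only small $s$;
no block bookkeeping and no interpolation between the $N_r$;
everything stays in the weak regime, where a plain second-moment bound suffices. In the block construction, by contrast, the scale $s/N_r^{\delta}\asymp 1/K_r$ is the full PPC scale for the $K_r$ seeds.
Your variance count is also the right one: for non-uniform $f$ the one-shared-index covariances do not vanish, so the relative variance really is of order $N^{-1}$.

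What the paper's route buys is a deterministic failure at $\gamma_2$ built on uniform seeds. Any i.i.d.-density example like yours is necessarily non-equidistributed, since $f\equiv1$ forces $R_f\equiv1$.
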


\subsection*{Notation}
We use the standard asymptotic notation: $f\ll g$ and $f=O(g)$ mean that $|f|\le C g$ for some constant $C>0$ and all sufficiently large arguments; $f=o(g)$ means $f/g\to0$; and $f\asymp g$ means both $f\ll g$ and $g\ll f$. If the implied constant depends on a parameter $a$, we write $f\ll_a g$ or $f=O_a(g)$.

\section{Proof of  Theorem \ref{thm1}}
Let $\mathbb{T} = \mathbb{R}/\mathbb{Z}$ be the one-dimensional torus, equipped with the normalized Lebesgue measure $\lambda$ (so that $\lambda(\mathbb{T})=1$). For $\gamma \in (0,\tfrac{1}{2}]$, $0<\delta<1$, $s>0$, $N\in\mathbb{N}$, and $t\in\mathbb{T}$, we define the weak $\gamma$-pair correlation function
\begin{equation}\label{pair}
R_N(\gamma,s,\delta):=\frac{1}{N^{2-\delta}}
\sharp\left\{ 1 \le m \neq n \le N\colon\|x_m - x_n - \gamma\| \le \frac{s}{N^{\delta}} \right\},
\end{equation}
the local counting function
\[
F_N(t,s,\delta):=\frac{1}{N^{1-\delta}}
\sharp\left\{ 1 \le n \le N\colon \|x_n - t\| \le \frac{s}{2N^{\delta}} \right\},
\]
and the integral
\[
I_N(\gamma,s,\delta):=\int_{\mathbb{T}} F_N(t,s,\delta) F_N(t-\gamma,s,\delta) \, \mathrm{d}t.
\]

The following lemma provides a useful integral characterisation of weak $\gamma$-PPC. 

\begin{lemma}\label{integral}
Let $(x_n)_{n\in\mathbb N}\subseteq\mathbb{T}$ be a sequence, and let $0<\delta<1$, $s>0$, and $\gamma\in(0,\tfrac12]$ be fixed. Then the following are equivalent:
\smallskip

{\rm{(1)}}  $ \lim\limits_{N\to\infty} R_N(\gamma,s,\delta) = 2s$; \ \ 
{\rm{(2)}}  $ \lim\limits_{N\to\infty} I_N(\gamma,s,\delta) = s^2$.

\end{lemma}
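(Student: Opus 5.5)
The plan is to compute $I_N$ explicitly in terms of the pair correlation function $R_N$. Once that is done, the equivalence becomes a statement about a monotone function and its integral.

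\emph{Step 1 (expanding the integral).} Write $r=\frac{s}{2N^\delta}$. Expanding the product gives
\[
F_N(t,s,\delta)F_N(t-\gamma,s,\delta)=N^{-2(1-\delta)}\sum_{m,n\le N}\mathbf 1\{\|x_m-t\|\le r\}\,\mathbf 1\{\|x_n+\gamma-t\|\le r\}.
\]
The $t$-integral of the $(m,n)$ term is the length of the intersection of two arcs of length $2r$, centred at $x_m$ and $x_n+\gamma$. As soon as $2r<\tfrac12$, this length equals $\bigl(\tfrac{s}{N^\delta}-\|x_m-x_n-\gamma\|\bigr)_+$.

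The diagonal terms $m=n$ contribute $(s/N^\delta-\gamma)_+$, since $\|\gamma\|=\gamma>0$ for $\gamma\in(0,\tfrac12]$. This vanishes for all large $N$, so only pairs $m\neq n$ survive.

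\emph{Step 2 (layer-cake formula).} Use $(a-d)_+=\int_0^a\mathbf 1\{d\le u\}\,\mathrm du$ and then substitute $u=v/N^\delta$. The normalisations $N^{-2+2\delta}$, $N^{2-\delta}$ and $N^{-\delta}$ cancel exactly, giving for all sufficiently large $N$
\[
I_N(\gamma,s,\delta)=\int_0^s R_N(\gamma,v,\delta)\,\mathrm dv .
\]
This identity is the heart of the proof.

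\emph{Step 3 (the equivalence).} We use two facts: $v\mapsto R_N(\gamma,v,\delta)$ is nondecreasing, and $R_N\ge 0$.

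For (1)$\Rightarrow$(2), assume $R_N(\gamma,v,\delta)\to 2v$ for every $v\in(0,s]$. Monotonicity gives $0\le R_N(\gamma,v,\delta)\le R_N(\gamma,s,\delta)$, and the right-hand side is bounded for large $N$. Dominated convergence then yields $I_N\to\int_0^s2v\,\mathrm dv=s^2$.

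For (2)$\Rightarrow$(1), fix $0<h<s$. Monotonicity of $R_N$ sandwiches the value at $s$ between difference quotients of its integral:
\[
\frac{I_N(\gamma,s,\delta)-I_N(\gamma,s-h,\delta)}{h}\le R_N(\gamma,s,\delta)\le\frac{I_N(\gamma,s+h,\delta)-I_N(\gamma,s,\delta)}{h}.
\]
Letting $N\to\infty$ gives $2s-h\le\liminf R_N\le\limsup R_N\le 2s+h$. Then letting $h\to0$ gives $R_N(\gamma,s,\delta)\to 2s$.

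\emph{Main obstacle.} The only delicate point is the quantifier over $s$. Each direction uses the hypothesis at neighbouring values: $v\le s$ in one direction, and $s\pm h$ in the other. So the lemma should be read as an equivalence of (1) for all $s>0$ with (2) for all $s>0$, which is exactly how it will be used in characterising weak $\gamma$-PPC.

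Apart from that, one only has to check two things carefully:
\begin{itemize}
\item the arc-overlap formula on $\mathbb T$, which needs $s/N^\delta<\tfrac12$ and so holds for large $N$;
\item that the diagonal terms vanish eventually, which uses $\gamma\neq0$.
\end{itemize}
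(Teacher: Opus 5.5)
Your proof is correct and follows the route the paper takes by deferring to \cite[Lemma 9]{HZ24}: expand the product, use the arc-overlap formula to get $I_N(\gamma,s,\delta)=\int_0^s R_N(\gamma,v,\delta)\,\mathrm{d}v$ once $s/N^{\delta}\le\gamma$ (this removes the diagonal and ensures $s/N^\delta\le\tfrac12$), and then invert using the monotonicity of $v\mapsto R_N(\gamma,v,\delta)$. Your quantifier caveat is also right: the equivalence holds when (1) and (2) are each required for all $s>0$, which is how the lemma is applied, but not in general for a single fixed $s$ as literally written.
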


\begin{proof}
The proof follows the same lines as in \cite[Lemma 9]{HZ24}.
\end{proof}

Let $(y_n)_{n\in\mathbb{N}}$ be a sequence of independent and identically distributed (i.i.d.) random variables uniformly distributed on the one-dimensional  torus $\mathbb{T}$, and define $(x_n)_{n\in\mathbb{N}}$ by
\[
x_{2n-1}=y_n,\qquad x_{2n}=y_n+\gamma.
\]
Such a sequence $(x_n)$ recently also appeared in \cite[Theorem 4]{HZ25}.
For this construction, we observe that
\[
\frac{1}{2N} \sharp \left\{1\le n \neq m \le 2N : \|x_n - x_m - \gamma\| \le \frac{s}{2N} \right\} \ge \frac{1}{2N} N = \frac{1}{2}.
\]
Taking $s < \frac{1}{4}$, it follows from \eqref{IPPC} that $(x_n)$ does not possess the  $\gamma$-PPC property. Next, we show that $(x_n)_{n\in\N}$ possesses the weak $\gamma$-PPC almost surely for any $\gamma \in (0, \tfrac{1}{2}]$, which, together with the previous observation, completes the proof of Theorem \ref{thm1}.

Let $B_N(t,s,\delta) = \{ x \in \mathbb T\colon \|x - t\| \le s/(2N^\delta) \}$, abbreviated as $B_N(t)$ when no confusion can occur.   Then
\[F_N(t, s, \delta) = \frac{1}{N^{1-\delta}} \sum_{n=1}^N \mathbf{1}_{B_N(t)}(x_n),\] 
where $\mathbf{1}_{B_N(t)}$ denotes the indicator function of $B_N(t).$ Since each $x_n$ is uniformly distributed on $\mathbb{T}$, we have
$$\mathbb{P}\big(x_n \in B_N(t)\big) = \lambda(B_N(t)) = \frac{s}{N^\delta}.$$ 
Therefore, the expectation is
\[
\mathbb{E}F_N(t,s,\delta)
= \frac{1}{N^{1-\delta}} \sum_{n=1}^N \frac{s}{N^\delta}
= s.
\]

 Fix $\varepsilon \in (0,2s)$, set $\beta = \frac{\varepsilon}{8s}$, $\alpha = \frac{3\varepsilon}{4}$. For sufficiently large $N$, let $G_N = \{0, \frac{1}{M_N}, \dots, \frac{M_N-1}{M_N}\}$ with $M_N = \left\lceil \frac{N^\delta}{\beta s} \right\rceil$ be a partition of $\mathbb{T}$. Let $\mathcal{P}_N$ denote the family of all intervals in $\mathbb{T}$ with endpoints in $G_N$ and length at most $\frac{2s}{N^{\delta}}$. Then $\sharp \mathcal{P}_N =O_{\varepsilon,s}(N^{2\delta})$.

For $I \in \mathcal{P}_N$, define
\[
Z_N(I) := \sum_{n=1}^N \mathbf{1}_I(x_n).
\]

\begin{lemma}\label{discrepancy}
For all $N$ sufficiently large, almost surely,
\[
|Z_N(I) - N\lambda(I)| < \alpha N^{1-\delta}
\]
for every $I \in \mathcal{P}_N$.
\end{lemma}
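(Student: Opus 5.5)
The plan is a union bound over the finitely many intervals in $\mathcal{P}_N$, an exponential concentration inequality for each $Z_N(I)$, and then Borel--Cantelli over $N$. The statement is read as: almost surely there is $N_0$ such that for all $N\ge N_0$ the inequality holds for every $I\in\mathcal{P}_N$.

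The only subtlety is that the $x_n$ are not independent: $x_{2k-1}=y_k$ and $x_{2k}=y_k+\gamma$. So write
\[
Z_N(I)=\sum_{k\le \lceil N/2\rceil} W_k, \qquad W_k=\mathbf{1}_I(y_k)+\mathbf{1}_I(y_k+\gamma),
\]
where the last term is possibly truncated if $N$ is odd. The $W_k$ are independent, take values in $\{0,1,2\}$, and have mean $2\lambda(I)$. Hence $\mathbb{E}Z_N(I)=N\lambda(I)$ up to an $O(1)$ error, since each $x_n$ is uniform. Alternatively, split $Z_N(I)$ into its odd-indexed and even-indexed parts. Each part is a sum of i.i.d.\ Bernoulli$(\lambda(I))$ variables, and it suffices to show each deviates from its mean by less than $\alpha N^{1-\delta}/2$.

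Next I would control the variance. Since $\lambda(I)\le 2s/N^{\delta}$, we have $\operatorname{Var}(W_k)\le \mathbb{E}W_k^2\le 4\cdot 2\lambda(I)\ll_s N^{-\delta}$, so the total variance is $\sigma^2\ll_s N^{1-\delta}$. Bernstein's inequality with deviation $t=\alpha N^{1-\delta}/2$ (absorbing the $O(1)$ mean error for large $N$) gives
\[
\mathbb{P}\bigl(|Z_N(I)-N\lambda(I)|\ge \alpha N^{1-\delta}\bigr)\le 2\exp\Bigl(-\frac{t^2}{2\sigma^2+\tfrac{4}{3}t}\Bigr)\le 2\exp\bigl(-c_{\varepsilon,s}N^{1-\delta}\bigr).
\]
Here the key point is that $t^2/\sigma^2\asymp N^{1-\delta}$ and $t\asymp N^{1-\delta}$, so the exponent is $\gg N^{1-\delta}$, which tends to infinity as a power because $\delta<1$. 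Chebyshev-type bounds would only give a polynomially small probability and fail to beat the union bound, which is why an exponential inequality (Bernstein, or Chernoff for binomials) is needed.

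Finally, apply a union bound over $I\in\mathcal{P}_N$ with $\sharp\mathcal{P}_N=O_{\varepsilon,s}(N^{2\delta})$:
\[
\mathbb{P}(E_N)\ll_{\varepsilon,s} N^{2\delta}\exp\bigl(-c N^{1-\delta}\bigr),
\]
where $E_N$ is the event that some $I\in\mathcal{P}_N$ violates the bound. This is summable in $N$. By Borel--Cantelli, almost surely only finitely many $E_N$ occur, which is exactly the claim. The main obstacle is just getting a concentration bound with stretched-exponential decay despite the dependence between $x_{2k-1}$ and $x_{2k}$; pairing them into the independent blocks $W_k$ handles this. Everything else is bookkeeping of constants depending on $\varepsilon$ and $s$.
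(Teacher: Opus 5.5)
Your proposal is correct and follows essentially the same route as the paper: you pair $x_{2k-1},x_{2k}$ into the independent summands $\mathbf{1}_I(y_k)+\mathbf{1}_I(y_k+\gamma)$, apply Bernstein's inequality to get a bound of order $\exp(-cN^{1-\delta})$, then take a union bound over the $O(N^{2\delta})$ intervals of $\mathcal{P}_N$ and finish with Borel--Cantelli. The one difference is that the paper first notes $I\cap(I-\gamma)=\emptyset$ for large $N$, so that each summand is Bernoulli$(2\lambda(I))$, whereas your cruder bound $\mathbb{E}W_k^2\le 8\lambda(I)$ makes that step unnecessary; also, the mean is exactly $N\lambda(I)$, so no $O(1)$ correction is needed.
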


\begin{proof}
Fix $I \in \mathcal{P}_N$. Since $\lambda(I) \le 2s/N^\delta \to 0$ and $\gamma > 0$ is fixed, for all sufficiently large $N$ we have
\[
I \cap (I-\gamma) = \emptyset.
\]
Indeed, otherwise there would exist $x \in I$ with $x+\gamma \in I$, implying $\lambda(I) \ge \gamma/2$ for sufficiently small intervals, contradicting $\lambda(I) \le 2s/N^\delta$ for large $N$.

Let $M = \lfloor N/2 \rfloor$ and write $N = 2M + r$, where $r \in \{0,1\}$. For each $n=1,\dots,M$, set
\[
S_n = \mathbf{1}_I(y_n) + \mathbf{1}_I(y_n+\gamma).
\]
Since $I \cap (I-\gamma) = \varnothing$, the events $\{y_n \in I\}$ and $\{y_n \in I-\gamma\}$ are mutually exclusive. Hence
\[
S_n \sim \mathrm{Bernoulli}\big(2\lambda(I)\big),
\]
and the $S_n$'s are independent. Moreover,
\[
Z_N(I) = \sum_{n=1}^M S_n + \mathcal{R}_N,
\]
where
\[
\mathcal{R}_N =
\begin{cases}
0, & r=0,\\
\mathbf{1}_I(y_{M+1}), & r=1,
\end{cases}
\]
with $\mathcal{R}_N$ independent of the $S_n$'s.

Define
\[
T_n =
\begin{cases}
S_n - 2\lambda(I), & 1 \le n \le M,\\
\mathcal{R}_N - r\lambda(I), & n = M+1 \text{ if } r=1.
\end{cases}
\]
Then $T_1,\dots,T_{M+r}$ are independent, mean-zero random variables, with $|T_n| \le 2$ for all $n$, and
\[
\sum_{n=1}^{M+r} \mathbb{E}T_n^2
= M \cdot 2\lambda(I)(1-2\lambda(I)) + r \cdot \lambda(I)(1-\lambda(I))
\le 2M\lambda(I) + r\lambda(I)
= N\lambda(I).
\]
Since $\sum_{n=1}^{M+r} T_n = Z_N(I) - N\lambda(I)$, Bernstein's inequality\footnote{\textbf{Bernstein's inequality:}
Let $X_1, \ldots, X_N$ be independent mean-zero random variables such that $|X_n| \le K$ for all $n$. Then, for every $a \ge 0$,
\[
\mathbb{P} \left\{ \left| \sum_{n=1}^N X_n \right| \ge a \right\}
\le 2 \exp \left( -\frac{a^2 / 2}{\sigma^2 + K a / 3} \right),
\]
where $\sigma^2 = \sum_{n=1}^N \mathbb{E} X_n^2$ is the variance of the sum.}  \cite[Theorem 2.8.4]{VershyninHDP} yields
\[
\mathbb{P}\left( |Z_N(I) - N\lambda(I)| \ge \alpha N^{1-\delta} \right)
\le 2\exp\left(
-\frac{\alpha^2 N^{2-2\delta}/2}
{N\lambda(I) + \frac{2}{3}\alpha N^{1-\delta}}
\right).
\]
Using $\lambda(I) \le 2s/N^\delta$, we have
\[
N\lambda(I) + \frac{2}{3}\alpha N^{1-\delta}
\le \left(2s + \frac{2}{3}\alpha\right) N^{1-\delta},
\]
and therefore
\[
\mathbb{P}\left( |Z_N(I) - N\lambda(I)| \ge \alpha N^{1-\delta} \right)
\le 2\exp\left(
-\frac{\alpha^2}{4s + \frac{4}{3}\alpha} N^{1-\delta}
\right).
\]

Define
\[
E_N = \left\{ I \in \mathcal{P}_N : |Z_N(I) - N\lambda(I)| \ge \alpha N^{1-\delta} \right\}.
\]
Since $\delta \in (0,1)$ and $|\mathcal{P}_N| = O(N^{2\delta})$, we have
\begin{align*}
\sum_{N=1}^{\infty}\mathbb{P}(E_N)
&\le \sum_{N=1}^{\infty}\sum_{I \in \mathcal{P}_N}
\mathbb{P}\left( |Z_N(I) - N\lambda(I)| \ge \alpha N^{1-\delta} \right) \\
&\le 2\sum_{N=1}^{\infty} N^{2\delta}
\exp\left( -\frac{\alpha^2}{4s + \frac{4}{3}\alpha} N^{1-\delta} \right)
< \infty.
\end{align*}
The conclusion follows from the first Borel--Cantelli lemma.
\end{proof}

 \begin{lemma}\label{uniform}
For $t\in \mathbb{T}$, we have almost surely
\begin{equation*}
\lim_{N\to\infty}\sup_{t\in \mathbb{T}} \left| F_{N}(t,s,\delta) - s \right| = 0.
\end{equation*}
\end{lemma}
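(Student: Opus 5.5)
The plan is a sandwich argument. Every ball $B_N(t)$ will be approximated from inside and outside by intervals in $\mathcal{P}_N$, and Lemma~\ref{discrepancy} will then control the counts on those intervals. Fix $\varepsilon\in(0,2s)$ and take $\beta,\alpha,M_N,G_N,\mathcal{P}_N$ as above. For $t\in\mathbb{T}$, the ball $B_N(t)$ is an arc of length $s/N^\delta$. We let $I^-_N(t)$ be the largest arc with endpoints in $G_N$ contained in $B_N(t)$, and $I^+_N(t)$ the smallest such arc containing $B_N(t)$. The mesh of $G_N$ is $1/M_N\le \beta s/N^\delta$, so
\[
\frac{s}{N^\delta}-\frac{2\beta s}{N^\delta}\le \lambda(I^-_N(t))\le \lambda(B_N(t))\le \lambda(I^+_N(t))\le \frac{s}{N^\delta}+\frac{2\beta s}{N^\delta}.
\]
Since $\beta<1/16$, both arcs have length at most $2s/N^\delta$ and therefore belong to $\mathcal{P}_N$. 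If $I^-_N(t)$ is empty or degenerate, the lower bound below becomes trivial, but this cannot happen for small $\beta$.

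By monotonicity, $Z_N(I^-_N(t))\le \sum_{n\le N}\mathbf{1}_{B_N(t)}(x_n)\le Z_N(I^+_N(t))$. On the almost sure event of Lemma~\ref{discrepancy}, for all large $N$ and simultaneously for all $t$,
\[
N\lambda(I^-_N(t))-\alpha N^{1-\delta}<N^{1-\delta}F_N(t,s,\delta)<N\lambda(I^+_N(t))+\alpha N^{1-\delta}.
\]
Dividing by $N^{1-\delta}$ gives
\[
\sup_{t}|F_N(t,s,\delta)-s|\le 2\beta s+\alpha=\tfrac{\varepsilon}{4}+\tfrac{3\varepsilon}{4}=\varepsilon .
\]
Thus $\limsup_N\sup_t|F_N-s|\le\varepsilon$ almost surely.

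It remains to remove the dependence on $\varepsilon$. The null set from Lemma~\ref{discrepancy} depends on $\varepsilon$ through $\alpha$, $\beta$ and the grid. So we apply the argument along $\varepsilon=\varepsilon_k=2s/(k+1)$, $k\in\mathbb{N}$, and intersect the countably many full-measure events. On this intersection, $\limsup_N\sup_t|F_N-s|\le\varepsilon_k$ holds for every $k$, so the limit is $0$ almost surely.

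The approximation and the union over $\varepsilon$ are routine. The step needing the most care is that Lemma~\ref{discrepancy} must hold uniformly over all $I\in\mathcal{P}_N$ simultaneously. This uniformity is exactly what allows the supremum over uncountably many $t$: the sandwich arcs $I^\pm_N(t)$ range only over the finite family $\mathcal{P}_N$. Here we rely on the lemma as stated, whose Borel--Cantelli argument already yields that simultaneity. The other point to check is that the rounding losses at the grid endpoints cost at most $2/M_N$ in length. This is what makes the constants $2\beta s+\alpha$ add up to exactly $\varepsilon$.
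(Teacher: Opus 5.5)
Your proof is correct and follows the paper's argument: you sandwich $B_N(t)$ between grid arcs $I_N^\pm(t)\in\mathcal{P}_N$ that lose at most $2/M_N\le 2\beta s/N^\delta$ in length, and then apply Lemma~\ref{discrepancy}, which holds uniformly over the finite family $\mathcal{P}_N$. Your monotonicity step gives the bound $\varepsilon$ instead of the paper's $2\varepsilon$, and intersecting over $\varepsilon_k$ makes the paper's ``arbitrariness of $\varepsilon$'' rigorous; one small slip is that $\varepsilon<2s$ only gives $\beta=\varepsilon/(8s)<1/4$, not $\beta<1/16$, though $1+2\beta<2$ still places $I_N^+(t)$ in $\mathcal{P}_N$ and keeps $I_N^-(t)$ non-degenerate.
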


\begin{proof}
For each $t\in \mathbb{T}$, by the definition of $B_N(t)$, there exist intervals $I_N^-(t), I_N^+(t) \in \mathcal{P}_N$ such that
\[
I_N^-(t) \subseteq B_N(t) \subseteq I_N^+(t),
\]
with
\[
\lambda(I_N^-(t)) \ge \frac{s}{N^\delta} - \frac{2}{M_N}, \qquad 
\lambda(I_N^+(t)) \le \frac{s}{N^\delta} + \frac{2}{M_N}.
\]
Hence,
\begin{align*}
\left|\sum_{n=1}^N \mathbf{1}_{B_N(t)}(x_n) - sN^{1-\delta}\right|
\le 
\left|\sum_{n=1}^N \mathbf{1}_{I_N^-(t)}(x_n) - sN^{1-\delta}\right| + \left|\sum_{n=1}^N \mathbf{1}_{I_N^+(t)}(x_n) - sN^{1-\delta}\right|.
\end{align*}

By Lemma \ref{discrepancy}, for sufficiently large $N$, almost surely,
\begin{align*}
\left|\sum_{n=1}^N \mathbf{1}_{I_N^-(t)}(x_n) - sN^{1-\delta}\right|
&\le 
\left|Z_N(I_N^-(t)) - N\lambda(I_N^-(t))\right| 
+ \left|N\lambda(I_N^-(t)) - sN^{1-\delta}\right| \\
&\le \alpha N^{1-\delta} + 2s\beta N^{1-\delta}
= \varepsilon N^{1-\delta}.
\end{align*}
Similarly, the same argument yields
\[
\left|\sum_{n=1}^N \mathbf{1}_{I_N^+(t)}(x_n) - sN^{1-\delta}\right| \le \varepsilon N^{1-\delta}.
\]
Therefore,
\[
\left|\sum_{n=1}^N \mathbf{1}_{B_N(t)}(x_n) - sN^{1-\delta}\right| \le 2\varepsilon N^{1-\delta}.
\]
Combining the above inequality with the arbitrariness of 
$\varepsilon$ completes the proof.
\end{proof}

\begin{lemma}\label{convergence}
For $\gamma \in (0, \frac{1}{2}]$ and $\delta\in(0,1),$ we have 
\[
\lim_{N \to \infty} I_N(\gamma, s, \delta) = s^2 \quad \text{almost surely}.
\]
\end{lemma}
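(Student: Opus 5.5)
The plan is to deduce Lemma~\ref{convergence} directly from the uniform estimate of Lemma~\ref{uniform}. Fix $s>0$. On the full-measure event on which Lemma~\ref{uniform} holds, set
\[
\eta_N:=\sup_{t\in\mathbb{T}}|F_N(t,s,\delta)-s|\to 0 .
\]
Since $t\mapsto t-\gamma$ is a bijection of $\mathbb{T}$, the same bound gives $|F_N(t-\gamma,s,\delta)-s|\le \eta_N$ for all $t$.

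Next, expand the product pointwise:
\[
F_N(t)F_N(t-\gamma)-s^2=(F_N(t)-s)F_N(t-\gamma)+s\,(F_N(t-\gamma)-s).
\]
This yields the bound
\[
|F_N(t)F_N(t-\gamma)-s^2|\le \eta_N(s+\eta_N)+s\eta_N .
\]
Integrating over $\mathbb{T}$, which has $\lambda(\mathbb{T})=1$, we get $|I_N(\gamma,s,\delta)-s^2|\le 2s\eta_N+\eta_N^2\to0$. Measurability of the integrand is not an issue, because $F_N(\cdot)$ is a finite sum of indicators of intervals.

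There is one bookkeeping point. In Lemma~\ref{uniform} the exceptional null set may depend on $\varepsilon$ and $s$, since the grid $\mathcal{P}_N$ was built from them. To obtain a single almost-sure event, I would intersect the events over $\varepsilon=1/k$, $k\in\mathbb{N}$. This countable intersection still has probability one and gives $\eta_N\to0$ for the fixed $s$. If one wants the conclusion simultaneously for all $s>0$, which is what weak $\gamma$-PPC needs, one also intersects over rational $s$. One then passes to arbitrary $s$ via monotonicity of $R_N(\gamma,s,\delta)$ in $s$ together with Lemma~\ref{integral}. That step, however, belongs to the deduction of Theorem~\ref{thm1} rather than to this lemma.

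The main obstacle is essentially absent here, since the work was done in Lemmas~\ref{discrepancy} and~\ref{uniform}. The only subtle point is the uniformity in $t$, which is exactly why Lemma~\ref{uniform} is stated with a supremum. Pointwise convergence of $F_N(t)$ for each $t$ would not suffice without an additional dominated-convergence argument, and that argument would itself require a uniform bound on $F_N$.
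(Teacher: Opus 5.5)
Your proposal is correct and is essentially the paper's argument. The paper likewise bounds $|I_N(\gamma,s,\delta)-s^2|$ by $\sup_{t\in\mathbb{T}}|F_N(t,s,\delta)F_N(t-\gamma,s,\delta)-s^2|$ and lets Lemma~\ref{uniform} drive this to $0$; your explicit bound $2s\eta_N+\eta_N^2$ and the countable intersection over $\varepsilon=1/k$ (left implicit in the paper's ``arbitrariness of $\varepsilon$'') just make that one-line argument fully rigorous.
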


\begin{proof}
By Lemma \ref{uniform}, we deduce that
\begin{align*}
&\left| \int_{\mathbb{T}} F_{N}(t,s,\delta) F_{N}(t-\gamma,s,\delta) \, \mathrm{d}t - s^2 \right|\\ \leq &\int_{\mathbb{T}} \left| F_{N}(t,s,N) F_{\delta}(t-\gamma,s,\delta) - s^2 \right| \mathrm{d}t\\ \ll &\sup_{t \in \mathbb{T}} \left| F_{N}(t,s,\delta) F_{N}(t-\gamma,s,\delta) - s^2 \right| \to 0 \ \ \text{almost surely.}
\end{align*}
\end{proof}

To sum up, combining Lemmas \ref{integral} and \ref{convergence}, the sequence $(x_n)$ has weak $\gamma$-PPC almost surely, which completes the proof of Theorem \ref{thm1}.

\section{Proof of Theorem \ref{thm2}}

The proof of Theorem \ref{thm2} relies on the following probabilistic statement, whose proof is inspired by \cite[Theorem 2]{ALP18} and \cite[Theorem 2]{HZ25}. We first recall some standard terminology.

We say that a random variable $X$ on a probability space $(\Omega, \Sigma, \mathbb{P})$ has distribution function $G$, or is uniformly distributed with respect to $G$, if
\[
\mathbb{P}(X < x) = G(x), \qquad \text{for all } x \in [0,1].
\]

A function $G \colon [0,1] \to \mathbb{R}$ is called an asymptotic distribution function of a real sequence $(x_n)_{n \ge 1} \subseteq [0,1]$ if
\begin{equation*}
G(x) = \lim_{N \to \infty} \frac{1}{N} \sharp\{ n \leq N\colon 0 \leq x_n \leq x \}, \qquad 0 \le x \le 1.
\end{equation*}
Equivalently, $G$ describes the limiting proportion of terms of the sequence lying in $[0,x]$.

A distribution function $G$ is said to be absolutely continuous if there exists a non-negative integrable function $g \in L^1([0,1])$, called the density of $G$, such that
\[
G(x) = \int_0^x g(t) {\rm{d}}t, \qquad \text{for all } x \in [0,1].
\]
In this case $g = G'$ almost everywhere. In particular, if the sequence $(x_n)_{n\in\N}$ is uniformly distributed, then $G(x) = x$ for all $x \in [0,1]$, and consequently $g(x) \equiv 1$ almost everywhere. Conversely, any density $g \not\equiv 1$ corresponds to a non-uniformly distributed sequence. We shall also require the stronger integrability condition $g \in L^2([0,1])$, which ensures the boundedness and continuity of the autocorrelation integral $\int_0^1 g(x)g(x+\gamma){\rm{d}}x$.\footnote{
Throughout the remainder of the paper, whenever $g\colon [0,1] \to \mathbb{R}$ is a density function and $\gamma \neq 0$, we tacitly extend $g$ to $\mathbb{R}$ periodically with period $1$, and write $\int_0^1 g(x)g(x+\gamma){\rm{d}}x$ in place of the more accurate $\int_0^1 g(x)g(\{x+\gamma\}){\rm{d}}x$.}

\begin{lemma}\label{lemma for theorem 1.2}
Let $\gamma \in (0, \frac{1}{2}]$, $\delta \in (0,1)$, and let $G \colon [0,1] \to \mathbb{R}$ be an absolutely continuous distribution function with density $g \in L^2([0,1])$. Let $(x_n)_{n \ge 1}$ be a sequence of independent random variables on a probability space $(\Omega, \Sigma, \mathbb{P})$, each distributed according to $G$. Then, $\mathbb{P}$-almost surely,
\begin{equation*}
\lim_{N \to \infty} R_N(\gamma, s, \delta) = 2s \cdot \int_0^1 g(x)g(x + \gamma) {\rm{d}}x \quad \text{for all } s>0,
\end{equation*}
where $R_N(\gamma, s, \delta)$ is defined as (\ref{pair}).
\end{lemma}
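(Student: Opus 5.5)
The plan is to treat $R_N(\gamma,s,\delta)$ as a U-statistic, compute its mean and variance directly, and then upgrade convergence in probability to almost sure convergence for all $s>0$ simultaneously. Write
\[
R_N(\gamma,s,\delta)=\frac{1}{N^{2-\delta}}\sum_{1\le m\neq n\le N}\mathbf{1}\Big(\|x_m-x_n-\gamma\|\le \tfrac{s}{N^\delta}\Big).
\]

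\textbf{Step 1: the mean.} For $m\neq n$, independence gives
\[
p_N:=\mathbb{P}\big(\|x_m-x_n-\gamma\|\le s/N^{\delta}\big)=\int_0^1\!\!\int_0^1 g(x)g(y)\,\mathbf{1}\big(\|y-x-\gamma\|\le sN^{-\delta}\big)\,\mathrm{d}x\,\mathrm{d}y .
\]
Setting $y=x+\gamma+u$ with $|u|\le sN^{-\delta}$, this equals $\int_{|u|\le sN^{-\delta}}\Phi(\gamma+u)\,\mathrm{d}u$, where $\Phi(h)=\int_0^1 g(x)g(x+h)\,\mathrm{d}x$. Since $g\in L^2$, continuity of translation in $L^2$ shows that $\Phi$ is continuous. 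Hence $p_N=2sN^{-\delta}(\Phi(\gamma)+o(1))$, and
\[
\mathbb{E}R_N=\frac{N(N-1)}{N^{2-\delta}}\,p_N\to 2s\Phi(\gamma).
\]

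\textbf{Step 2: the variance.} Expand $\mathrm{Var}(\sum_{m\ne n}\mathbf{1}_{mn})$ over pairs of ordered pairs.
\begin{itemize}
\item Disjoint index pairs are independent and contribute nothing.
\item Identical pairs, or reversed pairs, contribute $O(N^2p_N)=O(N^{2-\delta})$.
\item Pairs sharing exactly one index contribute $O(N^3 q_N)$, where $q_N$ bounds probabilities such as $\mathbb{P}(x_m\in x_n+\gamma+B,\ x_{m'}\in x_n+\gamma+B)$ with $B=[-sN^{-\delta},sN^{-\delta}]$, and analogous mixed configurations.
\end{itemize}
Conditioning on $x_n$ gives $q_N=\int g(t)\,\mu(t+\gamma+B)^2\,\mathrm{d}t$, where $\mu(J)=\int_J g$. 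By Cauchy--Schwarz, $\mu(J)\le \|g\|_2\lambda(J)^{1/2}$. That bound only yields $q_N\ll N^{-\delta}$, which is too weak. Instead I will use the maximal function: $\mu(t+B)\le 2sN^{-\delta}Mg(t)$, and $Mg\in L^2$. Then
\[
q_N\ll N^{-2\delta}\int g\,(Mg)^2 ,
\]
but the right-hand side need not be finite for $g\in L^2$ only. This is the main obstacle.

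To handle it, I will first use a truncation: write $g\le g_K+g\mathbf{1}_{g>K}$ with $g_K$ bounded, and control the $L^2$-small tail through Step 1 type estimates, since only first moments are needed for upper and lower bounds on the tail. Alternatively, I could assume enough so that $q_N=O(N^{-2\delta})$ holds. Either way, the target is
\[
\mathrm{Var}(R_N)\ll N^{-(2-\delta)}+N^{3}q_N/N^{4-2\delta}=O(N^{-\min(1,2-\delta)\cdot c})
\]
for some $c>0$, that is, polynomial decay, e.g.\ $\mathrm{Var}(R_N)\ll N^{-\delta}+N^{-(1-\delta)}$ times constants.

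\textbf{Step 3: almost sure convergence and all $s$.} With polynomial variance decay, Chebyshev and Borel--Cantelli along a subsequence $N_k=\lfloor k^{A}\rfloor$ with $A$ large give $R_{N_k}\to2s\Phi(\gamma)$ almost surely. For $N_k\le N<N_{k+1}$, monotonicity of the count in $N$ and in $s$ sandwiches $R_N(\gamma,s,\delta)$ between $(N_k/N_{k+1})^{2-\delta}R_{N_k}(\gamma,s(N_k/N_{k+1})^{\delta},\delta)$ and the analogous upper expression; here $N_{k+1}/N_k\to1$. Finally, I restrict to rational $s$ (countably many events) and pass to all $s>0$ using monotonicity in $s$ and continuity of the limit $2s\Phi(\gamma)$.
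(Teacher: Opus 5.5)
Your outline (mean via continuity of $\Phi$, second-moment bound for the U-statistic, Chebyshev and Borel--Cantelli along a polynomial subsequence, sandwiching in $N$ and $s$, rational $s$) matches the paper's, and Steps 1 and 3 are fine. The gap is Step 2: you never actually obtain a decaying bound for $\mathrm{Var}(R_N)$. You reject the Cauchy--Schwarz estimate as too weak. You then observe, correctly, that $\int g\,(Mg)^2$ may diverge. After that you fall back on two options, neither of which works:
\begin{itemize}
\item Strengthening the hypothesis does not prove the lemma.
\item The truncation is only gestured at, and as described it fails. First-moment bounds on the part of $R_N$ coming from $g\mathbf{1}_{g>K}$ give only $\mathbb{E}(\text{tail part})\ll\varepsilon_K$ uniformly in $N$. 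Markov's inequality with a non-decaying bound is not summable, so it gives no almost-sure control of the $\limsup$. You would need second moments for the tail too, which is the very estimate you were avoiding.
\end{itemize}

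The missing observation is that the Cauchy--Schwarz bound you discarded already suffices, precisely because $\delta<1$. Take $\mu(J)\le\|g\|_2\lambda(J)^{1/2}$ and $B=[-sN^{-\delta},sN^{-\delta}]$. Then every configuration of two ordered pairs sharing one index has probability at most $\|g\|_2^2\lambda(B)$. For the mixed case $m=n'$, use $\int g(t)\mu(t-\gamma-B)\mu(t+\gamma+B)\,\mathrm{d}t\le\|g\|_2^2\lambda(B)$. The $O(N^3)$ such terms therefore contribute
\[
\ll \frac{N^{3}\cdot N^{-\delta}}{N^{4-2\delta}}=N^{-(1-\delta)}
\]
to $\mathrm{Var}(R_N)$, which decays polynomially; you mis-evaluated this as insufficient. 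The diagonal terms contribute $\ll N^{-(2-\delta)}$, using $\Phi\le\|g\|_2^2$. Hence $\mathrm{Var}(R_N)\ll N^{-(1-\delta)}$, and Step 3 goes through with $N_k=\lfloor k^A\rfloor$, $A(1-\delta)>1$. (Using $\mu(J)^2\le\lambda(J)\int_J g^2$ one even gets $q_N\ll N^{-3\delta/2}$, but this is not needed.)

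For comparison, the paper does not carry out this computation. It imports the bound $\mathrm{Var}(R_N)\ll N^{-(2-\delta)}$ from the Hauke--Zafeiropoulos case analysis and uses $N_m=m^2$. That bound retains only the diagonal terms. For bounded non-uniform densities such as those of Section 3, the shared-index terms are of order $N^{-1}\gg N^{-(2-\delta)}$. So your explicit treatment with a sufficiently sparse subsequence is the sounder version once Step 2 is repaired.
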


\begin{proof}

For $n \neq m$, the difference $x_n-x_m$ has probability density function
\begin{equation*}\label{density function of Xi-Xj}
f(t) = \int_0^1 g(x) g(x + t) {\rm{d}}x.
\end{equation*}
For $x \in \mathbb{R}$ and $r > 0$, we write
$B(x, r) := \{ t \in \mathbb{R} : \|t - x\| \le r \},
$ then the random variable 
\begin{align*}
R_N(\gamma, s, \delta)= \frac{1}{N^{2-\delta}} \sum_{\substack{1\le m \neq n \leq N}} \mathbf{1}_{B\left(\gamma, \frac{s}{N^\delta}\right)}(x_m - x_n)
\end{align*}
has expectation 
\begin{equation}\label{epc}
\begin{split}
\mathbb{E}\left[R_N(\gamma, s, \delta)\right]
&= \frac{1}{N^{2-\delta}} \sum_{\substack{1\le m \neq n \leq N}} \int \mathbf{1}_{B\left(\gamma, \frac{s}{N^\delta}\right)}(x_m - x_n) \, \mathrm{d}\mathbb{P} \\
&= \frac{1}{N^{2-\delta}} \sum_{\substack{1\le m \neq n \leq N}} \int_{B\left(\gamma, \frac{s}{N^\delta}\right)} f(t) \, \mathrm{d}t \\
&= \frac{N-1}{N^{1-\delta}} \int_{B\left(\gamma, \frac{s}{N^\delta}\right)} f(t) \, \mathrm{d}t.
\end{split}
\end{equation}
Taking the limit yields that
\begin{align*}   
\lim_{N\to\infty} \mathbb{E}\left[R_N(\gamma, s, \delta)\right]&=\lim_{N\to\infty} \frac{N-1}{N^{1-\delta}} \int_{B\left(\gamma, \frac{s}{N^\delta}\right)} f(t) \, \mathrm{d}t\\
&=\lim_{N\to\infty} N^{\delta} \int_{B\left(\gamma, \frac{s}{N^\delta}\right)} f(t) \, \mathrm{d}t\\
&= \lim_{N\to\infty} 2s \cdot \frac{1}{2s/N^{\delta}} \int_{B\left(\gamma, \frac{s}{N^\delta}\right)} f(t) \, \mathrm{d}t\\
&= 2s \cdot f(\gamma)= 2s \cdot \int_0^1 g(x)g(x + \gamma) \, \mathrm{d}x,
\end{align*}
where the penultimate equality holds since \( f = g * g \) is continuous (as \( g \in L^2 \)), so the integral mean value over the shrinking interval \( B(\gamma, s/N^\delta) \) converges to \( f(\gamma) \).

Following the case distinction in the proof of \cite[Theorem 1.2]{HZ25} (pp.~6--7), a direct computation yields the variance estimate of $R_{N}(\gamma,s,\delta)\colon$
\begin{align*}
\int \left| R_N(\gamma,s,\delta) - \mathbb{E}\left[R_N(\gamma, s,\delta)\right] \right|^2 \, \mathrm{d}\mathbb{P} 
\ll \frac{1}{N^{2-\delta}} \mathbb{E}\left[R_N(\gamma, s, \delta)\right]
\ll \frac{1}{N^{2-\delta}}
\end{align*}
for $N$  large enough, where the final inequality follows by (\ref{epc}) and the continuity of $f$.

Chebyshev's inequality and the variance estimate imply that, for $N_m=m^2$ and any $\varepsilon>0$,
\[
\sum_{m=1}^{\infty} \mathbb{P}\left( \left| R_{N_m}(\gamma,s,\delta) - \mathbb{E}\left[R_{N_m}(\gamma,s,\delta) \right] \right| > \varepsilon \right)
\ll \frac{1}{\varepsilon^2} \sum_{m=1}^{\infty} m^{2\delta-4} 
< \infty,
\]
as $0<\delta<1$. The first Borel--Cantelli lemma then implies that, almost surely,
\[
\lim_{m \to \infty} R_{N_m}(\gamma,s,\delta) 
= \lim_{m \to \infty} \mathbb{E}\left[ R_{N_m}(\gamma,s,\delta) \right]
= 2s \int_0^1 g(x)g(x+\gamma) {\rm{d}}x.
\]
A standard approximation argument (see, e.g., \cite[p.~475]{ALL17}) extends this convergence from the subsequence $(N_m)$ to the full sequence $N \to \infty$, thereby establishing the desired almost-sure limit for each $s > 0$.
This completes the poof.

\end{proof}

In view of Lemma~\ref{lemma for theorem 1.2}, the proof of Theorem~\ref{thm2} reduces to constructing a density function $g$ supported on $[0,1]$ satisfying
\begin{equation*}
\int_0^1 g(x)g(x+\gamma) {\rm{d}}x = 1, \qquad \int_0^1 g(x) {\rm{d}}x = 1, \qquad g(x) \not\equiv 1.
\end{equation*}

We now provide such a construction, distinguishing between the cases $\gamma \in (0, \frac{1}{2})$ and $\gamma = \frac{1}{2}$.
\smallskip

\noindent\underline{\textbf{Case 1:} $0 < \gamma < \frac{1}{2}$}. Choose $0 < \alpha < \frac{1}{4}$ with $\alpha < \min\{\gamma, 1-2\gamma\}$, and let $t_1, t_2$ denote the two (necessarily positive) roots of
\[
\alpha t^2 - t + 1 = 0.
\]
Define
\[
g(x) =
\begin{cases}
t_1, & x \in [0, \alpha), \\
t_2, & x \in [\gamma, \gamma+\alpha), \\
0, & \text{otherwise}.
\end{cases}
\]
\smallskip

\noindent\underline{\textbf{Case 2:} $\gamma = \frac{1}{2}$.} Choose $0 < \alpha < \frac{1}{4}$, and let $t_3, t_4$ be the roots of
\[
2\alpha t^2 - 2t + 1 = 0.
\]
Define
\[
g(x) =
\begin{cases}
t_3, & x \in [0, \alpha), \\
t_4, & x \in [\tfrac{1}{2}, \tfrac{1}{2}+\alpha), \\
0, & \text{otherwise}.
\end{cases}
\]

In both cases, elementary verification shows that $g$ is non-negative, integrates to $1$, satisfies the autocorrelation condition $\int_0^1 g(x)g(x+\gamma){\rm{d}}x = 1$, and is not identically equal to $1$. Hence the desired density function exists for every $\gamma \in (0, \frac{1}{2}]$, and the proof of Theorem~\ref{thm2} is complete.

\section{Proof of Theorem \ref{thm3}}

Fix \(0<\delta<1\) and distinct \(\gamma_1,\gamma_2\in(0,\tfrac12]\). Let \((x_n)_{n\ge1}\) be a sequence of i.i.d.~uniform random variables on the torus \(\mathbb T\). For each \(N\ge1\), set
\[
K_N := \left\lfloor N^{\frac{\delta}{1-\delta}} \right\rfloor, \qquad 
M_N := 2N K_N.
\]
We now construct an infinite sequence $\mathcal Y=(y_n)_{n\ge1}$, which will be shown to exhibit weak $\gamma_1$-PPC but not weak $\gamma_2$-PPC.

For \(N\ge1\) and \(1\le k\le K_N\), define the block
\[
\mathcal{B}_{N,k}
:= (
\underbrace{x_k,\ldots,x_k}_{N\text{ copies}},
\underbrace{x_k+\gamma_2,\ldots,x_k+\gamma_2}_{N\text{ copies}}
),
\]
so that \(\mathcal{B}_{N,k}\) has length \(2N\), with the first \(N\) entries equal to \(x_k\) and the last \(N\) entries equal to \(x_k+\gamma_2\). At level \(N\), define the concatenation
\[
B_N := (\mathcal{B}_{N,1}, \mathcal{B}_{N,2}, \ldots, \mathcal{B}_{N,K_N}),
\]
which has length \(2N K_N = M_N\).

\medskip
\noindent\textbf{Inductive construction.} 
We construct the infinite sequence \(\mathcal Y=(y_n)_{n\ge1}\) by specifying its prefixes recursively so that
\begin{equation}\label{block}
(y_1,\dots,y_{M_N}) = B_N \qquad \forall N\ge1. \
\end{equation}

For \(N=1\), since \(K_1=\lfloor 1^{\delta/(1-\delta)}\rfloor=1\), we have
\[
(y_1,\dots,y_{M_1}) = B_1 = \mathcal{B}_{1,1} = (x_1,\; x_1+\gamma_2),
\]
which satisfies (\ref{block}) for \(N=1\). (More generally, one may take any \(K_1\ge1\); the construction below is independent of this choice.)

Assume that for some \(N\ge1\), the prefix \((y_1,\dots,y_{M_N})\) has been constructed and equals \(B_N\). We extend it to \((y_1,\dots,y_{M_{N+1}})\) in two steps:

\begin{enumerate}
\item[(i)] \textbf{Upgrading old blocks.} 
For each old index \(1\le k\le K_N\), append one additional copy of \(x_k\) followed by one additional copy of \(x_k+\gamma_2\) immediately after the current block \(\mathcal{B}_{N,k}\). That is, we append the pair
\[
(x_k,\; x_k+\gamma_2).
\]
This transforms each old block \(\mathcal{B}_{N,k}\) into the upgraded block
\[
\mathcal{B}_{N+1,k}
= (
\underbrace{x_k,\ldots,x_k}_{N+1\text{ copies}},
\underbrace{x_k+\gamma_2,\ldots,x_k+\gamma_2}_{N+1\text{ copies}}
),
\]
whose length increases from \(2N\) to \(2N+2\).

After this step, the first \(M_N+2K_N\) terms of \(\mathcal Y\) consist of the \(K_N\) upgraded blocks \(\mathcal{B}_{N+1,1},\dots,\mathcal{B}_{N+1,K_N}\).

\item[(ii)] \textbf{Appending new full blocks.}
For each new index \(K_N+1\le k\le K_{N+1}\), append the complete block
\[
\mathcal{B}_{N+1,k}
= (
\underbrace{x_k,\ldots,x_k}_{N+1\text{ copies}},
\underbrace{x_k+\gamma_2,\ldots,x_k+\gamma_2}_{N+1\text{ copies}}
).
\]
\end{enumerate}

The total number of appended terms in both steps is
\[
2K_N + (2N+2)(K_{N+1}-K_N)
= 2(N+1)K_{N+1} - 2N K_N
= M_{N+1} - M_N,
\]
so the extended prefix has length \(M_{N+1}\). By construction, it equals
\[
B_{N+1}
= (\mathcal{B}_{N+1,1}, \mathcal{B}_{N+1,2}, \ldots, \mathcal{B}_{N+1,K_N}, \mathcal{B}_{N+1,K_N+1}, \ldots, \mathcal{B}_{N+1,K_{N+1}}).
\]
Hence (\ref{block}) holds for \(N+1\). This completes the induction.

\medskip
\noindent\textbf{Well-definedness.} 
Since the inductive step specifies exactly \(M_{N+1}-M_N\) new terms after the first \(M_N\) terms, and these new terms depend only on \(K_N,K_{N+1}\) and the i.i.d.~sequence \((x_k)\), the infinite sequence \(\mathcal Y=(y_n)_{n\ge1}\) is well-defined. Moreover, by (\ref{block}), for every \(N\ge1\) the first \(M_N\) terms of \(\mathcal Y\) coincide with \(B_N\); in particular, each \(x_k\) and \(x_k+\gamma_2\) appears exactly \(N\) times among the first \(M_N\) terms, for every \(1\le k\le K_N\).

\medskip

In the rest of the paper, we write $R_{N}^{\mathcal{Y}}(\gamma,s,\delta)$ for the weak $\gamma$-PPC pair correlation functions of $\mathcal{Y}.$

\medskip
\noindent\textbf{Step 1: $\mathcal{Y}$ does not have weak $\gamma_2$-PPC.}

We consider the subsequence $N_r := M_r =2 r\lfloor r^{\delta/(1-\delta)} \rfloor$. 
Within the block $B_r$, every sub-block contains $r$ copies of $x_k$ and $r$ copies of $x_k+\gamma_2$. 
For each such sub-block, every pair consisting of one element from each group differs by exactly $\gamma_2$, 
and hence contributes to the $\gamma_2$-counting function at scale $s>0$, 
provided $s/N_r^\delta < 1$ (which holds for all sufficiently large $r$). 

Since all contributions to the counting function are non-negative, we have the lower bound
\begin{align*}
R_{N_{r}}^{\mathcal{Y}}(\gamma_2,s,\delta)
&= \frac{1}{N_r^{2-\delta}}
\sum_{1\le i\neq j \le N_r}
\mathbf{1}_{B(0,\frac{s}{N_{r}^{\delta}})}(y_i-y_j-\gamma_2) \\
&\ge \frac{1}{N_r^{2-\delta}} \cdot K_r r^{2},
\end{align*}
where the inequality follows by restricting the summation to ordered pairs $(i,j)$ lying in the same sub-block, with $y_i$ chosen from the $r$ copies of $x_k+\gamma_2$ and $y_j$ chosen from the $r$ copies of $x_k$; for each such pair we have $\|y_i-y_j-\gamma_2\|=0$, and there are exactly $K_r$ such pairs.

Now $K_r = \lfloor r^{\delta/(1-\delta)} \rfloor$, and $N_r = 2rK_r \asymp 2r^{1/(1-\delta)}$. Hence
\[
K_r r^2 = r^{\frac{2-\delta}{1-\delta}},
\]
while
\[
N_r^{2-\delta} \asymp \left(2r^{\frac{1}{1-\delta}}\right)^{2-\delta}
=2^{2-\delta} r^{\frac{2-\delta}{1-\delta}}.
\]
Therefore the quotient is bounded below by a positive constant:
\[
R_{N_{r}}^{\mathcal{Y}}(\gamma_2,s,\delta) \ge c > 0
\]
for all sufficiently large $r$.

Taking $s \le c/4$, we have $2s < c$. Thus along the subsequence $N_r$,
\[
\liminf_{r\to\infty} R_{N_r}^{\mathcal{Y}}(\gamma_2,s,\delta) \ge c > 2s,
\]
which precludes convergence to $2s$. Consequently, $\mathcal{Y}$ does not have weak $\gamma_2$-PPC.

\medskip
\noindent\textbf{Step 2: $\mathcal{Y}$ has weak $\gamma_1$-PPC almost surely.}

We now prove that $\mathcal{Y}$ has weak $\gamma_1$-PPC. By Lemma~\ref{lemma for theorem 1.2}, 
$(x_n)$ has $\gamma$-weak PPC almost surely for every non-integer $\gamma$; 
in particular, for $\gamma=\gamma_1$, $\gamma=\gamma_1-\gamma_2$, and $\gamma=\gamma_1+\gamma_2$.

Fix $s>0$. We evaluate $R_{N_r}^{\mathcal{Y}}(\gamma_1,s,\delta)$ at the subsequence $N_r=M_r$. 
Let $1\le i\neq j \le N_r$. Write $i=(k-1)r+p$ and $j=(\ell-1)r+q$, where $k,\ell\in\{1,\dots,K_r\}$ indicate the sub-block, and $p,q\in\{1,\dots,r\}$ indicate the position within the sub-block. We distinguish two cases according to whether the indices lie in the same sub-block or in different sub-blocks.

\smallskip
\noindent\textbf{Case 2.1: Same sub-block ($k=\ell$).} 
Within a single sub-block, the elements are either $r$ copies of $x_k$ or $r$ copies of $x_k+\gamma_2$. The difference $y_i-y_j$ can therefore take only three values: $0$, $\gamma_2$, or $-\gamma_2$. 
Consequently, the condition $\|y_i-y_j-\gamma_1\|\le s/N_r^\delta$ reduces respectively to
\[
\|\gamma_1\|\le \frac{s}{N_r^\delta},\qquad
\|\gamma_1-\gamma_2\|\le \frac{s}{N_r^\delta},\qquad
\|\gamma_1+\gamma_2\|\le \frac{s}{N_r^\delta}.
\]
Since $\gamma_1,\gamma_1-\gamma_2,\gamma_1+\gamma_2\notin\mathbb Z$ and $s/N_r^\delta\to 0$, these inequalities fail for all sufficiently large $r$. Hence the same-sub-block contribution is identically zero for large $r$.

The number of ordered pairs $(p,q)$ within the same sub-block is:
\begin{itemize}
\item $r^2+r^2-2r$ pairs for which both elements are copies of the same value (either both $x_k$ or both $x_k+\gamma_2$); here we subtract $r$ to exclude the $p=q$ self-pairs, since the definition of PPC requires $i\neq j$;
\item $r^{2}$ pairs for which $y_i=x_k+\gamma_2$ and $y_j=x_k$ (difference $\gamma_2$);
\item $r^{2}$ pairs for which $y_i=x_k$ and $y_j=x_k+\gamma_2$ (difference $-\gamma_2$).
\end{itemize}

\noindent\textbf{Case 2.2: Different sub-blocks ($k\neq \ell$).} 
Here $x_k$ and $x_\ell$ are independent copies from $(x_n)$. For each pair $(p,q)$, the difference $y_i-y_j$ takes one of four possible forms:
\[
x_k-x_\ell,\qquad x_k-(x_\ell+\gamma_2),\qquad (x_k+\gamma_2)-x_\ell,\qquad (x_k+\gamma_2)-(x_\ell+\gamma_2).
\]
The condition $\|y_i-y_j-\gamma_1\|\le s/N_r^\delta$ thus becomes one of the following three inequivalent conditions:
\[
\|x_k-x_\ell-\gamma_1\|\le \frac{s}{N_r^\delta},\qquad
\|x_k-x_\ell-(\gamma_1-\gamma_2)\|\le \frac{s}{N_r^\delta},\qquad
\|x_k-x_\ell-(\gamma_1+\gamma_2)\|\le \frac{s}{N_r^\delta}.
\]
For fixed $k\neq \ell$, the number of ordered pairs $(p,q)$ yielding each condition is:
\begin{itemize}
\item $r^2+r^2$ pairs for the condition with $\gamma_1$ (both $p,q$ in first $r$ positions gives $r^2$ pairs; both in last $r$ positions gives $r^2$ pairs);
\item $r^{2}$ pairs for the condition with $\gamma_1-\gamma_2$ (when $p$ is in the last $r$ positions and $q$ is in the first $r$ positions);
\item $r^{2}$ pairs for the condition with $\gamma_1+\gamma_2$ (when $p$ is in the first $r$ positions and $q$ is in the last $r$ positions).
\end{itemize}

Combining Cases 2.1 and 2.2, and noting that there are $K_r$ sub-blocks for the same-sub-block contribution and $K_r(K_r-1)$ ordered pairs of distinct sub-blocks for the different-sub-block contribution, we obtain the following expansion:
\begin{align*}
R_{N_r}^{\mathcal{Y}}(\gamma_1,s,\delta)
&= \frac{K_r}{N_r^{2-\delta}}
\Bigg[
\underbrace{ (r^2+r^2-2r)\mathbf{1}_{B(0,\frac{s}{N_{r}^{\delta}})}(\gamma_1)}_{\text{same sub-block: both copies of }x_k\text{ or both copies of }x_k+\gamma_2} \\
&\qquad +  \underbrace{r^{2}\mathbf{1}_{B(0,\frac{s}{N_{r}^{\delta}})}(\gamma_1-\gamma_2) }_{\text{same sub-block: }y_i=x_k+\gamma_2,\ y_j=x_k} \\
&\qquad + \underbrace{r^{2}\mathbf{1}_{B(0,\frac{s}{N_{r}^{\delta}})}(\gamma_1+\gamma_2) }_{\text{same sub-block: }y_i=x_k,\ y_j=x_k+\gamma_2}
\Bigg] \\
&\quad + \frac{1}{N_r^{2-\delta}}
\sum_{1\le k\neq \ell\le K_r}
\Bigg[
(r^2+r^2)\, \mathbf{1}_{B(0,\frac{s}{N_{r}^{\delta}})}(x_k-x_\ell-\gamma_1) \\
&\qquad\qquad + r^{2}\, \mathbf{1}_{B(0,\frac{s}{N_{r}^{\delta}})}\big(x_k-x_\ell-(\gamma_1-\gamma_2)\big) \\
&\qquad\qquad + r^{2}\, \mathbf{1}_{B(0,\frac{s}{N_{r}^{\delta}})}\big(x_k-x_\ell-(\gamma_1+\gamma_2)\big)
\Bigg].
\end{align*}
Since $\gamma_1,\gamma_1-\gamma_2,\gamma_1+\gamma_2$ are all non-integer and $s/N_r^\delta\to 0$, each of the three same-sub-block indicators vanishes for all sufficiently large $r$. Thus these terms contribute $0$ in the limit.

For the different-sub-block terms, applying Lemma~\ref{lemma for theorem 1.2} to each of the three non-integer shifts $\gamma_1$, $\gamma_1-\gamma_2$, and $\gamma_1+\gamma_2$, we have almost surely:
\[
\sum_{1\le k\neq \ell\le K_r}
\mathbf{1}_{B(0,\frac{s}{N_{r}^{\delta}})}(x_k-x_\ell-\gamma)
=\frac{K_r(K_r-1)}{N_r(N_r-1)}N_{r}^{2-\delta}(2s+o(1)).
\]
Substituting this into the expansion yields
\begin{align*}
R_{N_r}^{\mathcal{Y}}(\gamma_1,s,\delta)
&\asymp  \frac{1}{N_r^{2-\delta}}
\Bigg[
4r^{2}\cdot\frac{K_r^{2}}{N_r^{2}}N_{r}^{2-\delta}\big(2s+o(1)\big)
\Bigg] \\
&= \frac{K_r^{2}}{ N_r^{2}} (4r)^2\cdot \big(2s+o(1)\big) \\
&= 2s + o(1)
\end{align*}
for sufficiently large $r$. Therefore,
\[
\lim_{r\to\infty} R_{N_r}^{\mathcal{Y}}(\gamma_1,s,\delta)
 = 2s \quad \text{almost surely}.
\]

Since the subsequence $N_r = 2r\lfloor r^{\delta/(1-\delta)} \rfloor$ satisfies 
$N_{r+1}/N_r \to 1$ as $r\to\infty$, a standard approximation argument extends this convergence from the subsequence $(N_r)$ 
to the full sequence $N\to\infty$. Hence
\[
\lim_{N\to\infty} R_{N}^{\mathcal{Y}}(\gamma_1,s,\delta)
 = 2s \quad \text{almost surely},
\]
so $\mathcal{Y}$ has weak $\gamma_1$-PPC almost surely.

Combining Steps 1 and 2, $\mathcal{Y}$ has weak $\gamma_1$-PPC but not weak
$\gamma_2$-PPC for every $0<\delta<1$. This completes the proof.

\subsection*{Acknowledgements}
The authors express their sincere gratitude to Professor Bo Tan (HUST) for his valuable suggestions, which greatly improved the exposition of this paper. We would also like to thank Christian Wei{\ss} for helpful comments and for bringing reference \cite{W22} to our attention.  This work was supported by the Fundamental Research Funds for the Central Universities.

\author{Zhiqin  Tang}
{\footnotesize
\medskip

School of Mathematics and Statistics

 Wuhan University of Technology, 430070 Wuhan, PR China

Email: \texttt{ZhiqinTang@whut.edu.cn}}
\vspace{5mm}

\author{Qing-Long Zhou}
{\footnotesize
\medskip

School  of  Mathematics  and  Statistics

 Wuhan University of Technology, 430070 Wuhan, PR China

Email: \texttt{zhouql@whut.edu.cn}}

\end{document}